 \newtheoremstyle{mytheorem}
 {3pt}
 {3pt}
 {\slshape}
 {}
 {\bfseries}
 {.}
 { }
 {}
\numberwithin{equation}{section}
\theoremstyle{theorem}
\newtheorem{theorem}{Theorem}[section]
\newtheorem{lemma}[theorem]{Lemma}
\theoremstyle{definition}
\newtheorem{remark}{Remark}[section]
\newcommand{\Keywords}[1]{\ifthenelse{\isempty{#1}}{}{\smallskip \smallskip \noindent \textbf{Keywords}. #1}}
\newcommand{\MSC}[2][2010]{\ifthenelse{\isempty{#2}}{}{\smallskip \smallskip \noindent \textbf{#1MSC}. #2}}
\newcommand{\abstractnote}[1]{\ifthenelse{\isempty{#1}}{}{\smallskip \smallskip \noindent \textsuperscript{\dag}#1}}
\def\specialsection{\@startsection{section}{1}%
  \z@{\linespacing\@plus\linespacing}{.5\linespacing}%
  {\normalfont}}
\def\section{\@startsection{section}{1}%
  \z@{.7\linespacing\@plus\linespacing}{.5\linespacing}%
  {\normalfont\scshape}}
\patchcmd{\@settitle}{\uppercasenonmath\@title}{\Large\boldmath}{}{}
\patchcmd{\@settitle}{\begin{center}}{\begin{flushleft}}{}{}
\patchcmd{\@settitle}{\end{center}}{\end{flushleft}}{}{}
\patchcmd{\@setauthors}{\MakeUppercase}{\normalsize}{}{}
\patchcmd{\@setauthors}{\centering}{\raggedright}{}{}
\patchcmd{\section}{\scshape}{\large\bfseries\boldmath}{}{}
\patchcmd{\subsection}{\bfseries}{\bfseries\boldmath}{}{}
\renewcommand{\@secnumfont}{\bfseries}
\patchcmd{\@startsection}{\@afterindenttrue}{\@afterindentfalse}{}{}
\patchcmd{\abstract}{\leftmargin3pc}{\leftmargin1pc}{}{}
\def\maketitle{\par
  \@topnum\z@ 
  \@setcopyright
  \thispagestyle{empty}
  \ifx\@empty\shortauthors \let\shortauthors\shorttitle
  \else \andify\shortauthors
  \fi
  \@maketitle@hook
  \begingroup
  \@maketitle
  \toks@\@xp{\shortauthors}\@temptokena\@xp{\shorttitle}%
  \toks4{\def\\{ \ignorespaces}}
  \edef\@tempa{%
    \@nx\markboth{\the\toks4
      \@nx\MakeUppercase{\the\toks@}}{\the\@temptokena}}%
  \@tempa
  \endgroup
  \c@footnote\z@
  \@cleartopmattertags
}
\title[Arithmetic properties for cubic partition pairs]{Arithmetic properties for cubic partition pairs modulo powers of 3}
\author[S. Chern]{Shane Chern}
\address{Department of Mathematics, Pennsylvania State University, University Park, PA 16802, USA}
\email{shanechern@psu.edu; chenxiaohang92@gmail.com}
\date{}
\begin{document}

%

\maketitle

\begin{abstract}

Let $b(n)$ denote the number of cubic partition pairs of $n$. In this paper, we aim to provide a strategy to obtain arithmetic properties of $b(n)$. This gives affirmative answers to two of Lin's conjectures.

\Keywords{Cubic partition pair, congruence.}

\MSC{Primary 05A17; Secondary 11P83.}
\end{abstract}

\section{Introduction and main results}

A partition of a natural number $n$ is a nonincreasing sequence of positive integers whose sum equals $n$. Let $p(n)$ be the number of partitions. One of Ramanujan's most beautiful work is the following three congruences
\begin{align*}
p(5n+4)&\equiv 0 \pmod{5},\\
p(7n+5)&\equiv 0 \pmod{7},\\
p(11n+6)&\equiv 0 \pmod{11}.
\end{align*}

Motivated by Ramanujan's result, Chan \cite{Ch2010} introduced the cubic partition function $a(n)$ with generating function given by
\begin{equation}\label{eq:a}
\sum_{n\ge 0}a(n)q^n = \frac{1}{(q;q)_\infty (q^2;q^2)_\infty},\quad |q|<1,
\end{equation}
where and in the sequel, we use the standard notation
$$(a;q)_\infty = \prod_{n\ge 0}(1-aq^n).$$
One of the analogous congruences to Ramanujan's result is
\begin{equation}
a(3n+2)\equiv 0 \pmod{3},
\end{equation}
which can be deduced easily from the following identity obtained by Chan
\begin{equation}
\sum_{n\ge 0} a(3n+2)q^n=3\frac{(q^3;q^3)_\infty^3(q^6;q^6)_\infty^3}{(q;q)_\infty^4(q^2;q^2)_\infty^4}.
\end{equation}
Later on, many authors studied other Ramanujan-like congruences for $a(n)$. For instance, Chen and Lin \cite{CL2009} found four new congruences modulo $7$, while recently the author and Dastidar \cite{CD2016} obtained two congruences modulo $11$.

After Chan's work, many authors also investigated analogous partition functions. For instance, in 2011, Zhao and Zhong \cite{ZZ2011} studied the cubic partition pair function $b(n)$ given by
\begin{equation}\label{eq:b}
\sum_{n\ge 0}b(n)q^n = \frac{1}{(q;q)_\infty^2 (q^2;q^2)_\infty^2}.
\end{equation}
They obtained several congruences similar to Ramanujan's result:
\begin{align*}
b(5n+4)&\equiv 0 \pmod{5},\\
b(7n+i)&\equiv 0 \pmod{7},\\
b(9n+7)&\equiv 0 \pmod{9},
\end{align*}
where $i=2,3,4,6$. For combinatorial proofs of the first two congruences, the reader may refer to Kim \cite{Ki2011} and Zhou \cite{Zh2012} respectively. In a recent paper, Lin \cite{Li2017} studied the arithmetic properties of $b(n)$ modulo $27$. For instance, he showed the following infinite families of congruences
$$b\left(81\left(7^{2\alpha+1}(7n+k)+\frac{7^{2\alpha+2}-1}{12}\right)+7\right)\equiv 0 \pmod{27}$$
for $\alpha\ge 0$ and $k=1,2,3,4,5,6$. He also conjectured the following three congruences modulo powers of $3$.
\begin{equation}\label{eq:lin1}
b(81n+61)\equiv 0\pmod{243},
\end{equation}
and
\begin{align}
\sum_{n\ge 0} b(81n+7)q^n&\equiv 9\frac{(q^2;q^2)_\infty(q^3;q^3)_\infty^2}{(q^6;q^6)_\infty}\pmod{81},\label{eq:lin2}\\
\sum_{n\ge 0} b(81n+34)q^n&\equiv 36\frac{(q;q)_\infty(q^6;q^6)_\infty^2}{(q^3;q^3)_\infty}\pmod{81}.\label{eq:lin3}
\end{align}

In this paper, we will provide a strategy to give affirmative answers to Lin's conjectures. In fact, for \eqref{eq:lin1}, we have a stronger result
\begin{theorem}\label{th:1.1}
For any $n\ge 0$,
\begin{equation}\label{eq:th1}
b(81n+61)\equiv 0\pmod{729}.
\end{equation}
\end{theorem}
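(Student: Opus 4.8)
The plan is to peel off the progression $81n+61$ by iterated $3$-dissection of the generating function $\mathcal{B}(q):=\sum_{n\ge 0}b(n)q^n=1/(f_1^2f_2^2)$, where throughout I write $f_k:=(q^k;q^k)_\infty$. Since $61=1+2\cdot 3+0\cdot 9+2\cdot 27$, the base-$3$ digits of $61$ from least to most significant are $1,2,0,2$, and the target series is reached in four steps: extract the residue-$1$ part of $\mathcal{B}$ to get $\sum_n b(3n+1)q^n$; extract the residue-$2$ part of that to get $\sum_n b(9n+7)q^n$; extract the residue-$0$ part to get $\sum_n b(27n+7)q^n$; and finally extract the residue-$2$ part to get $\sum_n b(81n+61)q^n$. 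The engine for each step is an explicit $3$-dissection of $1/(f_1f_2)$ (hence of its square $\mathcal{B}$, and of the various eta-quotients that appear downstream), which can be assembled from the classical identities for the cubic partition function — in particular Chan's formula $\sum_n a(3n+2)q^n=3f_3^3f_6^3/(f_1^4f_2^4)$ together with the companion dissections of $\sum_n a(3n)q^n$ and $\sum_n a(3n+1)q^n$ — and from the two-variable theta and Borwein cubic identities. I would record these as preliminary lemmas (most are already in the literature, e.g.\ in Hirschhorn's work and in Lin \cite{Li2017}).

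The crucial intermediate output is an explicit identity for $\sum_n b(9n+7)q^n$ refining the congruence $b(9n+7)\equiv 0\pmod 9$ of Zhao and Zhong: after the first two dissections one obtains $\sum_n b(9n+7)q^n=9\,X(q)$, where $X$ is an explicit finite $\mathbb{Z}$-linear combination of eta-quotients in $q,q^2,q^3,\dots$. Applying the third and fourth dissections to $X$ then gives $\sum_n b(27n+7)q^n=9\,X_0(q)$ and $\sum_n b(81n+61)q^n=9\,Y(q)$, where $X_0$ is the residue-$0$ part of $X$ and $Y(q)=\sum_n x(9n+6)q^n$ with $X=\sum_k x(k)q^k$. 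Hence Theorem~\ref{th:1.1} is equivalent to $Y(q)\equiv 0\pmod{81}$, i.e.\ $x(9n+6)\equiv 0\pmod{3^4}$ for all $n$ (note that the sibling residues $x(9n)$ and $x(9n+3)$ would instead reproduce Lin's predicted forms \eqref{eq:lin2} and \eqref{eq:lin3}, so the same computation settles all three of his conjectures). To prove $x(9n+6)\equiv 0\pmod{3^4}$ I would $3$-dissect $X$ twice more, writing each dissected piece as a $\mathbb{Z}$-linear combination of finitely many eta-quotients built from $q^3,q^6,q^9,\dots$; using $f_1^3\equiv f_3\pmod 3$ and its iterates $f_1^{3^k}\equiv f_3^{3^{k-1}}\pmod{3^k}$ to pull out powers of $3$, and grouping the terms so that leading contributions cancel, one shows the surviving coefficients all carry a factor $3^4$. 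Combined with the overall factor $9$ this yields $3^6=729$.

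The main obstacle is precisely the $3$-adic bookkeeping in this last stage. A single $3$-dissection of an eta-quotient only manifestly supplies one extra power of $3$, so a naive count gives only $9\cdot 3\cdot 3=81$ rather than $729$; to reach the full modulus one must exhibit genuine cancellations among the eta-quotient terms, and this forces one to choose the right basis of eta-quotients at each level and to verify a short list of modular-function identities on $\Gamma_0(N)$ with small $N$ (a power of $2$ times a power of $3$) that make those cancellations explicit. A secondary, purely technical, difficulty is controlling the proliferation of eta-quotient terms over four successive dissections; as in Lin's treatment it helps to express everything in terms of a small set of generators (such as $f_1^3/f_3$, $f_2^3/f_6$ and the cubic theta functions) and to reduce modulo $729$ as early as possible so that only finitely many residues need to be tracked.
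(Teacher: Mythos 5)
Your overall route---iterated $3$-dissection of $\sum b(n)q^n$ following the base-$3$ digits of $61$, reaching $\sum b(81n+61)q^n$ in four extraction steps through the intermediate series $\sum b(3n+1)q^n$, $\sum b(9n+7)q^n$, $\sum b(27n+7)q^n$---is exactly the paper's route, and your bookkeeping of which residue class to extract at each step is correct. But the proposal has a genuine gap at precisely the point you yourself flag as ``the main obstacle'': you observe that a naive count of powers of $3$ gives only $9\cdot 3\cdot 3=81$, and you then assert that ``grouping the terms so that leading contributions cancel, one shows the surviving coefficients all carry a factor $3^4$,'' to be justified by an unspecified ``short list of modular-function identities on $\Gamma_0(N)$.'' That assertion is the entire content of the theorem; nothing in the proposal identifies the cancellation mechanism, the basis in which it becomes visible, or the identities that would certify it. As written, the argument proves at most $b(81n+61)\equiv 0\pmod{81}$, which is weaker even than Lin's conjectured $243$.

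The paper's resolution is worth contrasting with your plan. Rather than working with a proliferating family of eta-quotients and trying to force cancellations modulo $3$ at each stage, it fixes once and for all the basis $\varphi(-q)^{s}\varphi(-q^3)^{t}/\psi(q)^{\beta}$ and shows (Lemma \ref{le:2.4}, built on Shen's identity \eqref{eq:2.1} and the dissections \eqref{eq:2.3}, \eqref{eq:2.4} of $\varphi(-q)^{\pm 1}$) that this class of series is closed under $3$-dissection, with \emph{exact} rational coefficients $v(s_i,t_i)$ computable at every step --- no reduction modulo $3$ is performed until the very end. After four dissections the coefficients of $\sum b(81n+61)q^n$, reduced modulo $2187$, are all equal to $729$ or $1458$, so divisibility by $729$ is read off term by term; the ``hidden'' powers of $3$ you were hoping to extract by cancellation are simply present in the exactly computed coefficients. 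If you want to complete your version of the argument, you would need either to adopt such a closed, exactly-dissectable basis, or to actually exhibit and prove the eta-quotient identities that realize the cancellations --- neither of which the proposal currently does.
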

\noindent We also obtain
\begin{theorem}\label{th:1.1.5}
For any $n\ge 0$,
\begin{equation}\label{eq:th1.5}
b(243n+61)\equiv 0 \pmod{2187}.
\end{equation}
\end{theorem}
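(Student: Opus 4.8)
\medskip
\noindent\textbf{Proof idea.}
I would obtain Theorem~\ref{th:1.1.5} by pushing the iterated $3$-dissection behind Theorem~\ref{th:1.1} one further step. Write $E_k=(q^k;q^k)_\infty$, so that $\sum_{n\ge0}b(n)q^n=1/(E_1^2E_2^2)$, and for $f(q)=\sum_{m}c(m)q^m$ let $U_3f(q)=\sum_{m}c(3m)q^m$. The plan is to exhibit a finite set $\mathcal B$ of eta-quotients on $\Gamma_0(6)$ (monomials in $E_1,E_2,E_3,E_6$) such that (i) the relevant $3$-dissection components of $1/(E_1^2E_2^2)$ lie in the $\mathbb Z$-span of $\mathcal B$, and (ii) $U_3$, composed with multiplication by a suitable power of $q$, maps this span into itself via an explicit integral transition matrix. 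Iterating $U_3$ while sieving down to the progression $243n+61$ passes through the intermediate generating functions $\sum b(3n+1)q^n$, $\sum b(9n+7)q^n$, $\sum b(27n+7)q^n$, $\sum b(81n+61)q^n$, and the $3$-adic valuation of the coefficient vector at each stage is read off from the corresponding product of transition matrices. Theorem~\ref{th:1.1} is the statement that after four steps every entry is divisible by $3^6$; Theorem~\ref{th:1.1.5} is the statement that a fifth step, along the branch $n\mapsto3n$ leading to $243n+61$, gains one more factor of $3$.

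In detail, the fourth stage produces an identity
\[
\sum_{n\ge0}b(81n+61)\,q^n \;=\; 3^{6}\,\Phi(q),
\]
with $\Phi$ an explicit element of the $\mathbb Z$-span of $\mathcal B$. Because $243n+61=81\cdot(3n)+61$, this gives $\sum_{n\ge0}b(243n+61)q^n=3^{6}\,(U_3\Phi)(q)$, so it remains only to prove $U_3\Phi\equiv0\pmod3$, i.e.\ that $\Phi$ contains no term $q^{3m}$ modulo $3$. I would check this by reducing $\Phi$ modulo $3$ with the Frobenius congruences $f(q)^3\equiv f(q^3)\pmod3$ and $E_k^3\equiv E_{3k}\pmod3$: applied to the eta-quotients appearing in $\Phi$, these collapse $\Phi\bmod3$ into an expression visibly supported on exponents $\not\equiv0\pmod3$ --- a combination of $q\cdot(\text{series in }q^3)$ and $q^2\cdot(\text{series in }q^3)$. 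Then $U_3\Phi\equiv0\pmod3$, and therefore $b(243n+61)\equiv0\pmod{3^{7}}$ for all $n\ge0$.

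The main obstacle is making this machinery precise enough to pin down $\Phi$ modulo $3$: one must work out the explicit $3$-dissections of $1/(E_1^2E_2^2)$ and of each generator in $\mathcal B$, verify the closure property (ii), and --- the delicate point --- track the $3$-adic valuations of the matrix products carefully enough to see that an \emph{extra} factor of $3$, beyond the $3^6$ common to all three subprogressions $243n+61$, $243n+142$, $243n+223$ of $81n+61$, is forced on the $61$-branch. The eta-quotient identities are routine in principle but voluminous, and the whole argument hinges on this valuation bookkeeping; a numerical verification of the base cases $b(61)\equiv0\pmod{3^6}$ and $b(61)\equiv0\pmod{3^7}$ provides a convenient check.
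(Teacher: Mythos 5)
Your proposal follows essentially the same route as the paper: iterate the $3$-dissection to get $\sum_{n\ge0}b(81n+61)q^n\equiv 3^6\Phi(q)\pmod{3^7}$, then reduce $\Phi$ modulo $3$ via cubing (Frobenius) congruences to see it is supported on exponents $\not\equiv 0\pmod 3$ --- the paper does exactly this, exhibiting $\Phi\bmod 3$ as a series in $q^3$ times $(1-\xi(q))\xi(q)^4$ with $\xi(q)=2qw(q^3)$, so that $U_3\Phi\equiv0\pmod3$. Your eta-quotient/transition-matrix framing is just a change of basis from the paper's $\varphi(-q),\varphi(-q^3),\psi(q)$ setup (its Lemma~\ref{le:2.4}), so the argument is the same in substance.
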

\noindent For the remaining two congruences, we show
\begin{theorem}\label{th:1.2}
Eqs. \eqref{eq:lin2} and \eqref{eq:lin3} hold for any $n\ge 0$.
\end{theorem}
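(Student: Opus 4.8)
The starting observation is that $27\cdot 0+7=7$, $27\cdot 1+7=34$, and $27\cdot 2+7=61$, so the three series $\sum_{n\ge0}b(81n+7)q^n$, $\sum_{n\ge0}b(81n+34)q^n$, and $\sum_{n\ge0}b(81n+61)q^n$ are precisely the three components of the $3$-dissection of $G(q):=\sum_{n\ge0}b(27n+7)q^n$. Hence Theorems \ref{th:1.1}, \ref{th:1.1.5}, and \ref{th:1.2} all rest on a single object, $G(q)$, which in turn is produced from $\sum_{n\ge0}b(n)q^n$ by iterating three times the operation ``extract the progression $3n+r$ and reindex'': with residues $(r_1,r_2,r_3)=(1,2,0)$ the running index $n$ becomes $27n+(1+3\cdot 2+9\cdot 0)=27n+7$. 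So the plan is to pin down $G(q)$ to high $3$-adic precision, and then dissect it once more.

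The engine is a $3$-dissection toolkit. I would choose an eta-quotient $\xi=\xi(q)$ serving as a Hauptmodul for the relevant genus-zero congruence subgroup, together with a short list of eta-quotients $F_0,F_1,\dots$, and work inside the $\mathbb{Z}$-module $\mathcal{M}$ of series of the form $F_i\cdot g(\xi)$. The three facts to establish are: (i) after multiplication by the appropriate $F_i$, the series $\sum_{n\ge0}b(n)q^n$ is a power series in $\xi$ with integer coefficients; (ii) for each $r\in\{0,1,2\}$ the operator $U_{3,r}\colon\sum c_mq^m\mapsto\sum c_{3n+r}q^n$, suitably pre- and post-composed with normalizing eta-quotients, maps $\mathcal{M}$ into itself and sends $F_i\xi^k$ to $F_j$ times an explicit polynomial in $\xi$ --- this is where the polynomial (``modular equation'') relation between $\xi(q)$ and $\xi(q^3)$ is used; and (iii) a $3$-adic gain estimate: if a member of $\mathcal{M}$ is $F_i\sum_{k\ge1}c_k\xi^k$ with $v_3(c_k)\ge\lambda k+\mu$, then each $U_{3,r}$ returns a member whose coefficients satisfy a bound $v_3\ge\lambda'k+\mu'$ with strictly better constants.

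Granting the toolkit, the proof of Theorem \ref{th:1.2} is a finite computation. Write $\sum_{n\ge0}b(n)q^n$ in the normalized form of (i); apply $U_{3,1}$, then $U_{3,2}$, then $U_{3,0}$ to obtain a closed expression $G(q)=F_{i_0}\sum_{k\ge1}c_k\xi^k$ with explicit, $3$-adically controlled $c_k$; applying $U_{3,0}$, $U_{3,1}$, and $U_{3,2}$ to $G(q)$ then yields $\sum_{n\ge0}b(81n+7)q^n$, $\sum_{n\ge0}b(81n+34)q^n$, and $\sum_{n\ge0}b(81n+61)q^n$ respectively. By (iii), modulo $81$ only finitely many powers of $\xi$ survive in each of the three series; it then remains to rewrite $\xi$ in terms of eta-quotients and check that the surviving part of the first series is $9\,(q^2;q^2)_\infty(q^3;q^3)_\infty^2/(q^6;q^6)_\infty$ and that of the second is $36\,(q;q)_\infty(q^6;q^6)_\infty^2/(q^3;q^3)_\infty$, which proves \eqref{eq:lin2} and \eqref{eq:lin3}. (The third series reduces to $0$, in line with Theorem \ref{th:1.1}; a sharper instance of (iii) together with one additional application of $U_{3,0}$ gives Theorem \ref{th:1.1.5}.)

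The main obstacle is the construction and verification of the toolkit: selecting $\xi$ and the finite list $F_0,F_1,\dots$ so that $\mathcal{M}$ is genuinely closed under all three $U_{3,r}$ with integral structure constants, establishing the degree-$3$ modular equation for $\xi$, and proving the valuation-gain lemma (iii) with constants strong enough that reduction modulo $81$ --- and modulo higher powers of $3$ for Theorems \ref{th:1.1} and \ref{th:1.1.5} --- collapses everything to the handful of terms that reassemble into the stated eta-quotients. The iterated $U_{3,r}$ computations, while finite, are substantial, and carrying the exact $3$-adic bookkeeping through four (and five) dissection steps is the delicate point; once that infrastructure is in place, Theorem \ref{th:1.2} follows by inspecting a bounded number of low-order coefficients.
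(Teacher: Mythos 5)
Your proposal follows essentially the same route as the paper: the paper realizes your ``toolkit'' concretely via Shen's identity for $1/\psi(q)$ and the $3$-dissections of $\varphi(-q)^{\pm 1}$, iterates the dissection to reach $\sum_{n\ge0}b(27n+7)q^n$ with coefficients tracked modulo $3^7$, and then extracts the $q^{3n}$ and $q^{3n+1}$ parts and identifies them modulo $81$ with the stated eta-quotients (recognizing the coefficient polynomial as a multiple of $(1-\kappa(q))^{20}$ and using $(q;q)_\infty^9\equiv(q^3;q^3)_\infty^3\pmod{9}$). The only substantive difference is bookkeeping: the paper works with finite Laurent polynomials in $\varphi(-q)$ and $\varphi(-q^3)$ over $\psi(q)^{-\beta}$ rather than with power series in a single Hauptmodul, so no valuation-gain lemma of your type (iii) is needed --- the required divisibilities are simply read off the finitely many explicit coefficients.
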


\section{Preliminary results}

We first introduce two Ramanujan's theta functions $\varphi(q)$ and $\psi(q)$. They are defined by
\begin{align*}
\varphi(q)&=\frac{(q^2;q^2)_\infty^5}{(q;q)_\infty^2(q^4;q^4)_\infty^2}=\sum_{n=-\infty}^\infty q^{n^2},\\
\psi(q)&=\frac{(q^2;q^2)_\infty^2}{(q;q)_\infty}=\sum_{n\ge 0}q^{n(n+1)/2}.
\end{align*}

Let
$$w(q)=\frac{(q;q)_\infty(q^6;q^6)_\infty^3}{(q^2;q^2)_\infty(q^3;q^3)_\infty^3},$$
In two recent papers, Zhao and Zhong \cite{ZZ2011}, and Baruah and Ojah \cite{BO2011} obtained the 3-dissection of $\varphi(-q)$ and $1/\varphi(-q)$ respectively. Their results are

\begin{lemma}\label{le:2.2}
It holds that
\begin{equation}\label{eq:2.3}
\varphi(-q)=\varphi(-q^9)(1-2qw(q^3)).
\end{equation}
\end{lemma}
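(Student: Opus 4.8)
The plan is to prove \eqref{eq:2.3} by carrying out the $3$-dissection of the series $\varphi(-q)=\sum_{n=-\infty}^{\infty}(-1)^nq^{n^2}$ directly. Since $n^2\equiv 0\pmod 3$ exactly when $3\mid n$ and $n^2\equiv 1\pmod 3$ otherwise, the dissection has no $q^2$-component, which already explains why the right-hand side of \eqref{eq:2.3} has only two terms. The terms with $n=3m$ contribute $\sum_{m}(-1)^{3m}q^{9m^2}=\sum_{m}(-1)^mq^{9m^2}=\varphi(-q^9)$, while the terms with $n=3m+1$ together with those with $n=3m-1$ contribute, after the substitution $m\mapsto -m$ shows the two families agree,
\[
-2q\sum_{m=-\infty}^{\infty}(-1)^mq^{9m^2+6m}.
\]
Hence \eqref{eq:2.3} is equivalent to the single product identity
\[
\sum_{m=-\infty}^{\infty}(-1)^mq^{9m^2+6m}=\varphi(-q^9)\,w(q^3).
\]

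For the left-hand side I would invoke Jacobi's triple product in Ramanujan's form
\[
f(a,b):=\sum_{n=-\infty}^{\infty}a^{n(n+1)/2}b^{n(n-1)/2}=(-a;ab)_\infty(-b;ab)_\infty(ab;ab)_\infty .
\]
Writing $9m^2+6m=\frac{15}{2}m(m+1)+\frac{3}{2}m(m-1)$ and observing that $(-1)^{m(m+1)/2+m(m-1)/2}=(-1)^{m^2}=(-1)^m$, one obtains
\[
\sum_{m=-\infty}^{\infty}(-1)^mq^{9m^2+6m}=f(-q^{15},-q^3)=(q^3;q^{18})_\infty(q^{15};q^{18})_\infty(q^{18};q^{18})_\infty .
\]
On the other side, using $\varphi(-q^9)=(q^9;q^9)_\infty^2/(q^{18};q^{18})_\infty$ together with the definition of $w$ gives
\[
\varphi(-q^9)\,w(q^3)=\frac{(q^3;q^3)_\infty(q^{18};q^{18})_\infty^2}{(q^6;q^6)_\infty(q^9;q^9)_\infty}.
\]
So the whole lemma collapses to the elementary eta-quotient identity
\[
(q^3;q^{18})_\infty(q^{15};q^{18})_\infty(q^{18};q^{18})_\infty=\frac{(q^3;q^3)_\infty(q^{18};q^{18})_\infty^2}{(q^6;q^6)_\infty(q^9;q^9)_\infty}.
\]

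This last identity I would verify by splitting each eta-quotient over $q^3$ into its residue classes modulo $6$: write $(q^3;q^3)_\infty=\prod_{j=1}^{6}(q^{3j};q^{18})_\infty$, $(q^6;q^6)_\infty=(q^6;q^{18})_\infty(q^{12};q^{18})_\infty(q^{18};q^{18})_\infty$, and $(q^9;q^9)_\infty=(q^9;q^{18})_\infty(q^{18};q^{18})_\infty$; every factor then cancels and the equality becomes a tautology. Reassembling the pieces proves \eqref{eq:2.3}. The computation is entirely routine; the only places where care is needed are the choice of parameters $a=-q^{15}$, $b=-q^3$ in the triple product (a wrong sign or a mismatched exponent there would propagate everywhere) and the bookkeeping of the $6$-fold product splitting at the end. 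Alternatively, one may simply quote the $3$-dissection of $\varphi(-q)$ established by Zhao and Zhong \cite{ZZ2011}.
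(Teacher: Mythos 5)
Your proof is correct, but it is genuinely more than what the paper does: the paper offers no proof of this lemma at all, simply quoting the $3$-dissection of $\varphi(-q)$ from Zhao--Zhong \cite{ZZ2011} (and of $1/\varphi(-q)$ from Baruah--Ojah \cite{BO2011}). Your argument is a complete, self-contained derivation, and every step checks out: the residue analysis $n^2\not\equiv 2\pmod 3$ correctly explains the absence of a $q^{3n+2}$ component; the $n=3m$ terms give $\varphi(-q^9)$ since $(-1)^{3m}=(-1)^m$; the two classes $n=3m\pm1$ are matched by $m\mapsto-m$; the triple-product evaluation $\sum_m(-1)^mq^{9m^2+6m}=f(-q^{15},-q^3)=(q^3;q^{18})_\infty(q^{15};q^{18})_\infty(q^{18};q^{18})_\infty$ is right (the exponent decomposition $9m^2+6m=\tfrac{15}{2}m(m+1)+\tfrac{3}{2}m(m-1)$ and the sign $(-1)^{m^2}=(-1)^m$ are both correct); and the final eta-quotient identity
\begin{equation*}
(q^3;q^{18})_\infty(q^{15};q^{18})_\infty(q^{18};q^{18})_\infty=\frac{(q^3;q^3)_\infty(q^{18};q^{18})_\infty^2}{(q^6;q^6)_\infty(q^9;q^9)_\infty}
\end{equation*}
does reduce to a tautology under the mod-$6$ splitting of the products, using $\varphi(-q^9)=(q^9;q^9)_\infty^2/(q^{18};q^{18})_\infty$. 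What your approach buys is independence from the cited sources and transparency about where the two-term structure of the right-hand side comes from; what the paper's approach buys is brevity, since the lemma is standard and the paper's real content lies elsewhere. Either route is acceptable here.
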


\begin{lemma}\label{le:2.3}
It holds that
\begin{equation}\label{eq:2.4}
\frac{1}{\varphi(-q)}=\frac{\varphi(-q^9)^3}{\varphi(-q^3)^4}(1+2qw(q^3)+(2qw(q^3))^2).
\end{equation}
\end{lemma}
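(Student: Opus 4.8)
The plan is to deduce the 3-dissection of $1/\varphi(-q)$ directly from that of $\varphi(-q)$ recorded in Lemma~\ref{le:2.2}, rather than dissecting $1/\varphi(-q)$ from scratch. Write $x=2qw(q^3)$, so that \eqref{eq:2.3} reads $\varphi(-q)=\varphi(-q^9)(1-x)$. The claimed identity \eqref{eq:2.4} is
\[
\frac{1}{\varphi(-q)}=\frac{\varphi(-q^9)^3}{\varphi(-q^3)^4}\,\bigl(1+x+x^2\bigr).
\]
Cross-multiplying and substituting $\varphi(-q)=\varphi(-q^9)(1-x)$ turns the claim into $\dfrac{\varphi(-q^9)^4}{\varphi(-q^3)^4}(1-x)(1+x+x^2)=1$, and since $(1-x)(1+x+x^2)=1-x^3$, the entire statement collapses to the single identity
\[
\varphi(-q^3)^4=\varphi(-q^9)^4\bigl(1-x^3\bigr)=\varphi(-q^9)^4\bigl(1-8q^3w(q^3)^3\bigr).
\]
So after the rationalization trick, everything rests on proving this one eta-type relation.

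To establish it I would feed a primitive cube root of unity $\omega$ back into Lemma~\ref{le:2.2}. Because $\omega^3=1$, replacing $q$ by $\omega^k q$ in \eqref{eq:2.3} leaves $\varphi(-q^9)=\varphi(-(\omega^k q)^9)$ and $w(q^3)=w((\omega^k q)^3)$ unchanged, giving
\[
\varphi(-\omega^k q)=\varphi(-q^9)\bigl(1-\omega^k x\bigr),\qquad k=0,1,2.
\]
Multiplying the three relations and using $(1-x)(1-\omega x)(1-\omega^2 x)=1-x^3$ yields
\[
\varphi(-q)\,\varphi(-\omega q)\,\varphi(-\omega^2 q)=\varphi(-q^9)^3\bigl(1-x^3\bigr).
\]
It then remains only to evaluate the left-hand product in closed form and to check that it equals $\varphi(-q^3)^4/\varphi(-q^9)$; substituting that evaluation into the last display reproduces exactly the target identity $\varphi(-q^3)^4=\varphi(-q^9)^4(1-x^3)$.

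For the product evaluation I would pass to infinite products via the standard consequence $\varphi(-q)=(q;q)_\infty^2/(q^2;q^2)_\infty$ of the defining formula for $\varphi$. The elementary factorization $\prod_{k=0}^{2}(1-\omega^{kn}q^n)$ equals $(1-q^n)^3$ when $3\mid n$ and $1-q^{3n}$ when $3\nmid n$; taking the product over $n\ge 1$ and separating the two cases gives
\[
\prod_{k=0}^{2}(\omega^k q;\omega^k q)_\infty=\frac{(q^3;q^3)_\infty^4}{(q^9;q^9)_\infty},
\]
and the same computation with $q$ replaced by $q^2$ handles the denominators. Assembling the numerator and denominator contributions of $\varphi(-\omega^k q)=(\omega^k q;\omega^k q)_\infty^2/(\omega^{2k}q^2;\omega^{2k}q^2)_\infty$ yields $\varphi(-q)\varphi(-\omega q)\varphi(-\omega^2 q)=\varphi(-q^3)^4/\varphi(-q^9)$, closing the argument.

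I expect the main obstacle to be the bookkeeping in this last step: one must track the factors $(q^3;q^3)_\infty$, $(q^6;q^6)_\infty$, $(q^9;q^9)_\infty$, $(q^{18};q^{18})_\infty$ and verify that the resulting eta-quotient collapses precisely to $\varphi(-q^3)^4/\varphi(-q^9)$ with no residual factor. Should the cube-root-of-unity route prove awkward, an alternative is to prove the equivalent relation $\varphi(-q)^4=\varphi(-q^3)^4(1-8q\,w(q)^3)$ outright by clearing denominators into a polynomial identity among eta-quotients and verifying it as an equality of modular forms up to the Sturm bound; but the root-of-unity argument is preferable here since it is self-contained and reuses Lemma~\ref{le:2.2} with no genuinely new input.
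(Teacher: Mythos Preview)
Your argument is correct. The rationalization step reducing the claim to $\varphi(-q^3)^4=\varphi(-q^9)^4\bigl(1-\xi(q)^3\bigr)$ is sound, the cube-root-of-unity substitution into Lemma~\ref{le:2.2} is legitimate because both $\varphi(-q^9)$ and $w(q^3)$ are invariant under $q\mapsto\omega q$, and your infinite-product evaluation of $\varphi(-q)\varphi(-\omega q)\varphi(-\omega^2 q)$ does indeed collapse to $\varphi(-q^3)^4/\varphi(-q^9)$ after the bookkeeping you describe.

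Comparing with the paper: the paper does \emph{not} actually prove Lemma~\ref{le:2.3}; it is quoted from Baruah and Ojah~\cite{BO2011}, just as Lemma~\ref{le:2.2} is quoted from Zhao and Zhong~\cite{ZZ2011}. What the paper does do, immediately afterward, is multiply \eqref{eq:2.3} and \eqref{eq:2.4} together to obtain \eqref{eq:2.5}, namely $\xi(q)^3=1-\varphi(-q^3)^4/\varphi(-q^9)^4$. Your proof runs this logic in reverse: you establish \eqref{eq:2.5} independently via the root-of-unity trick, and then combine it with Lemma~\ref{le:2.2} to recover Lemma~\ref{le:2.3}. This is a genuinely different route---a self-contained derivation rather than a citation---and it has the pleasant side effect of making the paper's key auxiliary identity \eqref{eq:2.5} a direct consequence of Lemma~\ref{le:2.2} alone, with Lemma~\ref{le:2.3} then following for free.
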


\noindent For convenience, we write $\xi(q):=2qw(q^3)$. Multiplying \eqref{eq:2.3} and \eqref{eq:2.4}, we have
$$\frac{\varphi(-q^9)^4}{\varphi(-q^3)^4}(1-(2qw(q^3))^3)=1.$$
It follows that
\begin{equation}\label{eq:2.5}
\xi(q)^3=1-\frac{\varphi(-q^3)^4}{\varphi(-q^9)^4}.
\end{equation}

We also notice that in \cite{Sh1994}, Shen introduced 24 identities involving Ramanujan's theta functions. His Eq. (3.2) states

\begin{lemma}\label{le:2.1}
It holds that
\begin{equation}\label{eq:2.1}
\frac{\psi(q^3)^3}{\psi(q)}=\frac{1}{8q}\left(\frac{\varphi(-q^3)^3}{\varphi(-q)}-\frac{\varphi(-q)^3}{\varphi(-q^3)}\right).
\end{equation}
\end{lemma}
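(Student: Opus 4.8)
The plan is to clear denominators and reduce \eqref{eq:2.1} to an identity that follows from \eqref{eq:2.5} together with routine manipulation of the infinite products defining $\varphi$ and $\psi$. Multiplying both sides of \eqref{eq:2.1} by $8q\,\psi(q)\,\varphi(-q)\,\varphi(-q^3)$, the claim becomes
\begin{equation*}
8q\,\psi(q^3)^3\,\varphi(-q)\,\varphi(-q^3)=\psi(q)\bigl(\varphi(-q^3)^4-\varphi(-q)^4\bigr),
\end{equation*}
so it suffices to show that each side equals the single eta-quotient $8q\,\dfrac{(q;q)_\infty^2(q^6;q^6)_\infty^5}{(q^2;q^2)_\infty(q^3;q^3)_\infty}$.

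First I would handle the left-hand side. From the product forms in Section~2 one reads off the elementary identities $\psi(q^3)\varphi(-q^3)=(q^3;q^3)_\infty(q^6;q^6)_\infty$ and $\psi(q^3)^2=\dfrac{(q^6;q^6)_\infty^4}{(q^3;q^3)_\infty^2}$. Writing $\psi(q^3)^3\varphi(-q^3)=\psi(q^3)^2\cdot\bigl(\psi(q^3)\varphi(-q^3)\bigr)$ collapses the left-hand side to $8q\,\dfrac{(q^6;q^6)_\infty^5}{(q^3;q^3)_\infty}\,\varphi(-q)$, and substituting $\varphi(-q)=\dfrac{(q;q)_\infty^2}{(q^2;q^2)_\infty}$ gives the asserted form.

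For the right-hand side the key input is \eqref{eq:2.5}. Since $\xi(q)^3=8q^3w(q^3)^3$ and every factor appearing in $8q^3w(q^3)^3=1-\varphi(-q^3)^4/\varphi(-q^9)^4$ is a power series in $q^3$, the renaming $q^3\mapsto q$ is legitimate and yields $8qw(q)^3=1-\varphi(-q)^4/\varphi(-q^3)^4$, i.e.
\begin{equation*}
\varphi(-q^3)^4-\varphi(-q)^4=8q\,w(q)^3\,\varphi(-q^3)^4 .
\end{equation*}
Expanding $w(q)^3$ and $\varphi(-q^3)^4$ as products and cancelling, the right side simplifies to $8q\,\dfrac{(q;q)_\infty^3(q^6;q^6)_\infty^5}{(q^2;q^2)_\infty^3(q^3;q^3)_\infty}$; multiplying by $\psi(q)=\dfrac{(q^2;q^2)_\infty^2}{(q;q)_\infty}$ produces exactly the same eta-quotient obtained above, which finishes the proof.

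The argument is essentially bookkeeping with infinite products, so I do not expect a genuine obstacle; the only point that needs a word of justification is the substitution $q^3\mapsto q$ in \eqref{eq:2.5}, which is valid precisely because both sides of that identity lie in $\mathbb{Z}[[q^3]]$. (As an alternative one could simply invoke Shen's Eq.~(3.2) from \cite{Sh1994}, but deriving \eqref{eq:2.1} from \eqref{eq:2.5} in this way keeps the treatment self-contained and uses only the dissections already recorded.)
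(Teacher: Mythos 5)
Your proposal is correct, and it takes a genuinely different route from the paper: the paper offers no proof of Lemma \ref{le:2.1} at all, simply quoting it as Eq.\ (3.2) of Shen \cite{Sh1994}, whereas you derive it from \eqref{eq:2.5} and the product representations of $\varphi$, $\psi$, and $w$. The computation checks out: after clearing denominators both sides do reduce to $8q\,(q;q)_\infty^2(q^6;q^6)_\infty^5/\bigl((q^2;q^2)_\infty(q^3;q^3)_\infty\bigr)$, and the substitution $q^3\mapsto q$ in \eqref{eq:2.5} is legitimate for exactly the reason you state. There is no circularity, since \eqref{eq:2.5} is obtained in the paper by multiplying \eqref{eq:2.3} and \eqref{eq:2.4}, neither of which depends on Shen's identity. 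What your argument buys is a reduction in external inputs: it exhibits Shen's modular equation as a formal consequence of the two dissection lemmas (Lemmas \ref{le:2.2} and \ref{le:2.3}) that the paper already imports from Zhao--Zhong and Baruah--Ojah, so the logical dependencies of Section 2 collapse to those two lemmas alone. The only cosmetic gap is that the product form $\varphi(-q)=(q;q)_\infty^2/(q^2;q^2)_\infty$ is not literally displayed in Section 2 (only $\varphi(q)$ is), so you should state that standard identity explicitly before ``reading it off''; with that one line added, the proof is complete.
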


\noindent By Lemmas \ref{le:2.2} and \ref{le:2.3}, we can rewrite it in the following form which will be frequently used in our proof:
\begin{align}\label{eq:2.2}
\frac{1}{\psi(q)}&=\frac{1}{8q\psi(q^3)^3}\left(\frac{\varphi(-q^3)^3}{\varphi(-q)}-\frac{\varphi(-q)^3}{\varphi(-q^3)}\right)\notag \\
&= \frac{1}{8q\psi(q^3)^3}\left(\frac{\varphi(-q^9)^3}{\varphi(-q^3)}(1+\xi(q)+\xi(q)^2)-\frac{\varphi(-q^9)^3}{\varphi(-q^3)}(1-\xi(q))^3\right)\notag \\
&= \frac{1}{8q\psi(q^3)^3}\frac{\varphi(-q^9)^3}{\varphi(-q^3)}(4\xi(q)-2\xi(q)^2+\xi(q)^3).
\end{align}

Given a series $\sum_{n\ge 0}u(n)q^n$, we say it is \textit{$(s,t,\lambda;4)$-expressible} if we can find $\alpha\in\mathbb{Z}$, $\beta\in\mathbb{Z}_{>0}$ with $\beta \equiv \lambda\pmod{4}$, and three finite sequences $(s_1,s_2,\dots,s_n)\in\mathbb{Z}^n$, $(t_1,t_2,\dots,t_n)\in\mathbb{Z}^n$, $(v_1,v_2,\dots,v_n):=(v(s_1,t_1),v(s_2,t_2),\ldots,v(s_n,t_n))\in\mathbb{Q}^n$ with $s_1\equiv s_2\equiv\cdots\equiv s_n\equiv s \pmod{4}$, $t_1\equiv t_2\equiv\cdots\equiv t_n\equiv t \pmod{4}$, and $s_1+t_1=s_2+t_2=\cdots=s_n+t_n$, such that
$$\sum_{n\ge 0}u(n)q^n=\frac{q^\alpha}{\psi(q)^\beta}\sum_{i=1}^n v(s_i,t_i)\varphi(-q)^{s_i}\varphi(-q^3)^{t_i}.$$
The 3-dissection of a \textit{$(s,t,\lambda;4)$-expressible} series has the following property:

\begin{lemma}\label{le:2.4}
If $s$, $t$, and $\lambda$ satisfy one of the following conditions:
\begin{enumerate}[{\rm\indent(1)}]
\item $\lambda=0$, and $s\equiv t \pmod{4}$;
\item $\lambda=2$, and $s\equiv t+2 \pmod{4}$,
\end{enumerate}
then given an $(s,t,\lambda;4)$-expressible series $\sum_{n\ge 0}u(n)q^n$, we can find an integer $\ell\in\{0,1,2\}$ such that $\sum_{n\ge 0}u(3n+\ell)q^n$ is also $(s,t,\lambda;4)$-expressible.
\end{lemma}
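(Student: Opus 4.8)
The strategy is to take an $(s,t,\lambda;4)$-expressible series $\sum_{n\ge 0}u(n)q^n=\dfrac{q^\alpha}{\psi(q)^\beta}\sum_{i=1}^n v_i\,\varphi(-q)^{s_i}\varphi(-q^3)^{t_i}$ and substitute the $3$-dissections from Lemmas \ref{le:2.2}, \ref{le:2.3}, together with the expansion \eqref{eq:2.2} of $1/\psi(q)$, to rewrite the right-hand side as a $q$-series whose coefficients of $\varphi(-q^9)$, $\varphi(-q^3)$, $\psi(q^3)$, and powers of $q$ are visible. First I would reduce to the case $\beta=1$ (more precisely $\beta\in\{1,3\}$): using $\psi(q)^{-1}=\psi(q)^{-1}$ repeatedly and noting $\beta\equiv\lambda\pmod 4$, one applies \eqref{eq:2.2} $\beta$ times, or observes that $\psi(q)^{4}=\varphi(-q)\,\psi(q^2)^{\cdots}$-type relations let one fold pairs of $\psi(q)$'s into theta quotients; the cleanest route is to apply \eqref{eq:2.2} exactly $\beta$ times so that every $\psi(q)$ in the denominator is traded for a factor of $\psi(q^3)^3$ (which has only powers of $q^3$) at the cost of introducing one factor $\varphi(-q^9)^3/\varphi(-q^3)$ and a polynomial in $\xi(q)$ of degree $\le 3$ per application. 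Since $\varphi(-q)^{s_i}=\varphi(-q^9)^{s_i}(1-\xi(q))^{s_i}$ by \eqref{eq:2.3} and $\varphi(-q^3)$ is already a series in $q^3$, after these substitutions the whole expression becomes
$$\frac{q^{\alpha'}}{\psi(q^3)^{3\beta}}\,\varphi(-q^9)^{A}\,\varphi(-q^3)^{B}\sum_{i}v_i\,(1-\xi(q))^{s_i}\cdot P(\xi(q)),$$
where $P$ is a fixed polynomial of degree $\le 3\beta$ coming from the $1/\psi(q)$ expansions, $\xi(q)=2qw(q^3)$, and $\xi(q)^3=1-\varphi(-q^3)^4/\varphi(-q^9)^4$ by \eqref{eq:2.5}.

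Next I would use \eqref{eq:2.5} to reduce every power $\xi(q)^j$ with $j\ge 3$ modulo $\xi(q)^3$, so that the inner sum becomes $c_0(q^3)+c_1(q^3)\,\xi(q)+c_2(q^3)\,\xi(q)^2$ with the $c_j$ built from $\varphi(-q^3)$, $\varphi(-q^9)$ (the factor $\varphi(-q^3)^4/\varphi(-q^9)^4$ absorbed into $c_j$ via $A,B$). Because $\xi(q)=2qw(q^3)$ carries exactly one explicit factor of $q$ and $w(q^3)$ is a series in $q^3$, the three pieces $c_0\xi^0$, $c_1\xi^1$, $c_2\xi^2$ sit in the residue classes $q^{\alpha'}$, $q^{\alpha'+1}$, $q^{\alpha'+2}$ modulo $q^3$ respectively, apart from the overall $\psi(q^3)^{-3\beta}$ which lives entirely in powers of $q^3$. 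Hence extracting the arithmetic progression $3n+\ell$ for the appropriate $\ell\in\{0,1,2\}$ (namely $\ell\equiv -\alpha'-j\pmod 3$ for the surviving $j$) kills two of the three terms and leaves a single term of the form $\dfrac{q^{\alpha''}}{\psi(q^3)^{3\beta}}\,\varphi(-q^9)^{*}\varphi(-q^3)^{*}\,w(q^3)^{j}$; replacing $q^3\mapsto q$ and rewriting $w(q)$ and $\psi(q^3)^{3}$ back in terms of $\varphi(-q)$ and $\psi(q)$ (using $\psi(q^3)^3/\psi(q)=\tfrac1{8q}(\varphi(-q^3)^3/\varphi(-q)-\varphi(-q)^3/\varphi(-q^3))$ from Lemma \ref{le:2.1}, and the definition of $w$) returns the series to $(s,t,\lambda;4)$-expressible form.

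The point where the parity hypotheses in cases (1) and (2) are forced, and which I expect to be the main obstacle, is verifying the congruence conditions on the exponents after the dissection: one must check that the new sequences $(s_i')$, $(t_i')$ all lie in the prescribed residue classes mod $4$, that the new $\beta'=3\beta$ still satisfies $\beta'\equiv\lambda\pmod 4$, and that the new constant multiplicities $s_i'+t_i'$ are still all equal. The congruence $\beta'=3\beta\equiv 3\lambda\equiv\lambda\pmod 4$ holds precisely when $\lambda\in\{0,2\}$ — this is why those are the only two cases — and tracking the shift in $(s,t)$ coming from the substitution $\varphi(-q)^{s_i}=\varphi(-q^9)^{s_i}(1-\xi(q))^{s_i}$ together with the extra $\varphi(-q^9)^3/\varphi(-q^3)$ factors and the $\varphi(-q^3)^4/\varphi(-q^9)^4$ from $\xi^3$, then feeding everything through the $q^3\mapsto q$ substitution and Lemma \ref{le:2.1}, is where the two conditions $s\equiv t\pmod 4$ (for $\lambda=0$) and $s\equiv t+2\pmod 4$ (for $\lambda=2$) turn out to be exactly what makes the bookkeeping close up. I would organize this as an explicit linear-algebra computation on the exponent vectors $(s_i,t_i,\beta,\alpha)$ modulo $4$, showing the dissection map sends the admissible set into itself; the theta-function manipulations are then routine, and the only real content is this invariance check.
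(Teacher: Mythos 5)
Your overall strategy coincides with the paper's: trade $\psi(q)^{-\beta}$ for $\psi(q^3)^{-3\beta}$ via \eqref{eq:2.2}, expand $\varphi(-q)^{s_i}$ by Lemma \ref{le:2.2}, isolate one residue class of the exponent of $q$ using the fact that $\xi(q)=2qw(q^3)$ carries exactly one loose power of $q$, and eliminate $\xi^3$ via \eqref{eq:2.5}. Two points, however, need repair. First, your description of the surviving term as $q^{\alpha''}\psi(q^3)^{-3\beta}\varphi(-q^9)^{*}\varphi(-q^3)^{*}w(q^3)^{j}$ followed by ``rewriting $w(q)$ back in terms of $\varphi$ and $\psi$'' fails for $j=1,2$: writing $\varphi(-q)=E_1^2/E_2$ and $\psi(q)=E_2^2/E_1$ with $E_k=(q^k;q^k)_\infty$, one checks that $w=E_1E_2^{-1}E_3^{-3}E_6^{3}$ is not an integral-exponent product of $\varphi(-q),\psi(q),\varphi(-q^3),\psi(q^3)$ --- only $w^3$ is, via \eqref{eq:2.5}. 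The correct move, and the one the paper makes, is to choose $\ell$ so that the $j=0$ piece (equivalently, the $\xi^{3j}$ terms) survives; then no $w$ remains and the expression is already in the required shape. The lemma asserts existence of only one good $\ell$ precisely because the other two residue classes are not expressible in this form.

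Second, and more seriously, the step you yourself call ``the only real content'' --- that the new exponents stay in the residue classes $s,t\pmod 4$ and that $s_i'+t_i'$ is constant --- is announced but never carried out. In the paper this is a one-line check from the explicit formulas $s_{i,j}'=t_i-\beta+4j$ and $t_{i,j}'=s_i+3\beta-4j$ (for $s_i\ge 0$; a similar pair for $s_i<0$): under condition (1) one has $\beta\equiv 0$ and $s\equiv t$, under condition (2) one has $\beta\equiv 2$ and $s\equiv t+2$, and in both cases $s_{i,j}'\equiv t_i-\beta\equiv s_i\pmod 4$ and $t_{i,j}'\equiv s_i-\beta\equiv t_i\pmod 4$, while $s_{i,j}'+t_{i,j}'=s_i+t_i+2\beta$ is independent of $i$ and $j$. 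Note that the essential feature forcing the hypotheses is the transposition of the roles of $s_i$ and $t_i$ caused by the substitution $q^3\mapsto q$ (the old $\varphi(-q^3)$ becomes the new $\varphi(-q)$ and the old $\varphi(-q^9)$ the new $\varphi(-q^3)$); your sketch never exhibits this swap, so the claimed ``linear-algebra computation'' is not actually set up. Finally, for $s_i<0$ you cannot expand $(1-\xi)^{s_i}$ as a finite polynomial in $\xi$; you must instead invoke Lemma \ref{le:2.3}, i.e. $\varphi(-q)^{-1}=\varphi(-q^9)^{3}\varphi(-q^3)^{-4}(1+\xi+\xi^2)$, which produces a different (but again finite) exponent bookkeeping, as the paper records separately.
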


\begin{proof}
If $\sum_{n\ge 0}u(n)q^n$ is $(a,b,\lambda;4)$-expressible, we can write it as
$$\sum_{n\ge 0}u(n)q^n=\frac{q^\alpha}{\psi(q)^\beta}\sum_{i=1}^n v(s_i,t_i)\varphi(-q)^{s_i}\varphi(-q^3)^{t_i}.$$
By \eqref{eq:2.2}, we have
$$\sum_{n\ge 0}u(n)q^n=\frac{q^{\alpha-\beta}}{8^\beta\psi(q^3)^{3\beta}}\sum_{n\ge 0}g(n)q^n,$$
where
$$\sum_{n\ge 0}g(n)q^n=\left(\frac{\varphi(-q^9)^3}{\varphi(-q^3)}\right)^\beta(4\xi(q)-2\xi(q)^2+\xi(q)^3)^\beta\sum_{i=1}^n v(s_i,t_i)\varphi(-q)^{s_i}\varphi(-q^3)^{t_i}.$$
Suppose that $\alpha-\beta\equiv \ell\pmod{3}$, we have
$$\sum_{n\ge 0}u(3n+\ell)q^n=\frac{q^{(\alpha-\beta-\ell)/3}}{8^\beta\psi(q)^{3\beta}}\sum_{n\ge 0}g(3n)q^n.$$
Here note that if $\beta\equiv\lambda=0$ or $2\pmod{4}$, we have $3\beta\equiv\lambda\pmod{4}$.

Next for any $i=1,\ldots,n$, we aim to obtain the 3-dissection of
$$\sum_{n\ge 0}g_i(n)q^n=\varphi(-q)^{s_i}\varphi(-q^3)^{t_i}\left(\frac{\varphi(-q^9)^3}{\varphi(-q^3)}\right)^\beta(4\xi(q)-2\xi(q)^2+\xi(q)^3)^\beta.$$
If $s_i\ge 0$, then by Lemma \ref{le:2.2},
\begin{align*}
&\varphi(-q)^{s_i}\varphi(-q^3)^{t_i}\left(\frac{\varphi(-q^9)^3}{\varphi(-q^3)}\right)^\beta(4\xi(q)-2\xi(q)^2+\xi(q)^3)^\beta\\
&\quad=\varphi(-q^3)^{t_i-\beta}\varphi(-q^9)^{s_i+3\beta}(1-\xi(q))^{s_i}(4\xi(q)-2\xi(q)^2+\xi(q)^3)^\beta.
\end{align*}
Now we pick up terms of the form $q^{3n}$ and obtain
$$\varphi(-q^3)^{t_i-\beta}\varphi(-q^9)^{s_i+3\beta}\sum_{j=0}^{\left\lfloor\frac{s_i+3\beta}{3}\right\rfloor}m_{3j}\xi(q)^{3j}$$
for some $m_{3j}$. By \eqref{eq:2.5}, we therefore have
\begin{align*}
\sum_{n\ge 0}g_i(3n)q^n&=\varphi(-q)^{t_i-\beta}\varphi(-q^3)^{s_i+3\beta}\sum_{j=0}^{\left\lfloor\frac{s_i+3\beta}{3}\right\rfloor}m_{3j}\left(1-\frac{\varphi(-q)^4}{\varphi(-q^3)^4}\right)^j\\
&=\sum_{j=0}^{\left\lfloor\frac{s_i+3\beta}{3}\right\rfloor}v'(s_{i,j}',t_{i,j}')\varphi(-q)^{s_{i,j}'}\varphi(-q^3)^{t_{i,j}'},
\end{align*}
where
$$\begin{cases}
s_{i,j}'=t_i-\beta+4j,\\
t_{i,j}'=s_i+3\beta-4j,
\end{cases}$$
for $j=0,\ldots,\lfloor(s_i+3\beta)/3\rfloor$. For both conditions (i) and (ii), it is easy to see that $s_{i,j}'\equiv s_i\pmod{4}$ and $t_{i,j}'\equiv t_i\pmod{4}$ for all $j$. Similarly, if $s_j<0$, we can write $\sum_{n\ge 0}g_i(3n)q^n$ as a linear combination of $\varphi(-q)^{s_{i,j}'}\varphi(-q^3)^{t_{i,j}'}$ where
$$\begin{cases}
s_{i,j}'=t_i+4s_i-\beta+4j,\\
t_{i,j}'=-3s_i+3\beta-4j,
\end{cases}$$
for $j=0,\ldots,\lfloor(-2s_i+3\beta)/3\rfloor$. For both conditions (i) and (ii), we also see that $s_{i,j}'\equiv s_i\pmod{4}$ and $t_{i,j}'\equiv t_i\pmod{4}$ for all $j$. Furthermore, we have $s_{i,j}'+t_{i,j}'=s_i+t_i+2\beta$ for all $j$ whenever $s_i\ge 0$ or $s_i<0$.

Since the above argument holds for all $i=1,\ldots,n$, we therefore can find three new finite sequences $(s_1',s_2',\dots,s_{n'}')\in\mathbb{Z}^{n'}$, $(t_1',t_2',\dots,t_{n'}')\in\mathbb{Z}^{n'}$, $(v_1',v_2',\dots,v_{n'}'):=(v'(s_1',t_1'),v'(s_2',t_2'),\ldots,v'(s_{n'}',t_{n'}'))\in\mathbb{Q}^{n'}$ with $s_1'\equiv s_2'\equiv\cdots\equiv s_{n'}'\equiv s \pmod{4}$, $t_1'\equiv t_2'\equiv\cdots\equiv t_{n'}'\equiv t \pmod{4}$, and $s_1'+t_1'=s_2'+t_2'=\cdots=s_{n'}'+t_{n'}'$, such that
$$\sum_{n\ge 0}u(3n+\ell)q^n=\frac{q^{(\alpha-\beta-\ell)/3}}{\psi(q)^{3\beta}}\sum_{i=1}^{n'} v'(s_i',t_i')\varphi(-q)^{s_i'}\varphi(-q^3)^{t_i'}.$$
\end{proof}

\begin{remark}
The aim of this lemma is to show that if a series $\sum_{n\ge 0}u(n)q^n$ is $(s,t,\lambda;4)$-expressible satisfying conditions (i) or (ii), then we may write
$$\sum_{n\ge 0}u(3^kn+\ell)q^n$$
(for some computable $\ell$) as the form
$$\frac{q^\alpha}{\psi(q)^\beta}\sum_{i=1}^n v(s_i,t_i)\varphi(-q)^{s_i}\varphi(-q^3)^{t_i}.$$
In this sense, we can study the arithmetic property of $u(3^kn+\ell)$ by studying the coefficients $v(s_i,t_i)$.
\end{remark}

\section{Proofs}

In view of \eqref{eq:b}, we can rewrite the generating function of $b(n)$ as
\begin{equation}\label{eq:3.1}
\sum_{n\ge 0}b(n)q^n=\frac{1}{\psi(q)^2\varphi(-q)^2}=\frac{\varphi(-q^3)^0}{\psi(q)^2\varphi(-q)^2}.
\end{equation}
Hence $\sum_{n\ge 0}b(n)q^n$ is $(2,0,2;4)$-expressible. According to Lemma \ref{le:2.4}, we can see that for any positive integer $\alpha$, we can find some $\ell$, such that $\sum_{n\ge 0}b(3^\alpha n+\ell)q^n$ has the form
$$\frac{q^\alpha}{\psi(q)^\beta}\sum_{i=1}^n v(s_i,t_i)\varphi(-q)^{s_i}\varphi(-q^3)^{t_i},$$
where $s_1\equiv s_2\equiv\cdots\equiv s_n\equiv 2 \pmod{4}$, $t_1\equiv t_2\equiv\cdots\equiv t_n\equiv 0 \pmod{4}$, and $s_1+t_1=s_2+t_2=\cdots=s_n+t_n$.

Using the method presented in the proof of Lemma \ref{le:2.4}, we first give the detail of the 3-dissection of $\sum_{n\ge 0}b(n)q^n$.
\begin{align*}
\frac{1}{\psi(q)^2\varphi(-q)^2}&=\frac{q^{-2}}{2^6\psi(q^3)^6}\left(\frac{\varphi(-q^9)^3}{\varphi(-q^3)}\right)^2(4\xi(q)-2\xi(q)^2+\xi(q)^3)^2\\
&\quad\quad\times\left(\frac{\varphi(-q^9)^3}{\varphi(-q^3)^4}\right)^2(1+\xi(q)+\xi(q)^2)^2\\
&=\frac{q^{-3}\cdot q}{2^6\psi(q^3)^6}\frac{\varphi(-q^9)^{12}}{\varphi(-q^3)^{10}}(4\xi(q)-2\xi(q)^2+\xi(q)^3)^2(1+\xi(q)+\xi(q)^2)^2.
\end{align*}

Extracting terms involving $q^{3n+1}$ and replacing $q^{3n}$ by $q^n$, we have
$$\sum_{n\ge 0}b(3n+1)q^n=\frac{q^{-1}}{2^6\psi(q)^6}\left(2\varphi(-q)^2+7\frac{\varphi(-q^3)^4}{\varphi(-q)^2}-36\frac{\varphi(-q^3)^8}{\varphi(-q)^6}+27\frac{\varphi(-q^3)^{12}}{\varphi(-q)^{10}}\right).$$

Using the same method, we can obtain

\begin{align*}
&\sum_{n\ge 0}b(9n+7)q^n\\
&\quad=\frac{q^{-3}}{2^{24}\psi(q)^{18}}\bigg(252 \frac{\varphi(-q)^{18}}{\varphi(-q^3)^4}+16254 \varphi(-q)^{14} + 54054 \varphi(-q)^{10}\varphi(-q^3)^{4} \\
&\quad\quad+ 54180 \varphi(-q)^{6}\varphi(-q^3)^{8} - 3679992 \varphi(-q)^{2}\varphi(-q^3)^{12}+ 33485805 \frac{\varphi(-q^3)^{16}}{\varphi(-q)^{2}} \\
&\quad\quad - 201778452\frac{\varphi(-q^3)^{20}}{\varphi(-q)^{6}} + 846955116 \frac{\varphi(-q^3)^{24}}{\varphi(-q)^{10}} - 2445337188 \frac{\varphi(-q^3)^{28}}{\varphi(-q)^{14}}\\
&\quad\quad+ 4746831012 \frac{\varphi(-q^3)^{32}}{\varphi(-q)^{18}} - 6004220418 \frac{\varphi(-q^3)^{36}}{\varphi(-q)^{22}} + 4706441496 \frac{\varphi(-q^3)^{40}}{\varphi(-q)^{26}}\\
&\quad\quad- 2066242608 \frac{\varphi(-q^3)^{44}}{\varphi(-q)^{30}} + 387420489 \frac{\varphi(-q^3)^{48}}{\varphi(-q)^{34}}\bigg).
\end{align*}

Checking the coefficients $252,16254,\ldots$, we can see that they are all divisible by $9$. It therefore follows that
$$b(9n+7)\equiv 0\pmod{9}$$
for all $n\ge 0$, which was obtained previously by Zhao and Zhong \cite{ZZ2011}.

For the following dissections, the coefficients become very large. Hence we only display them under modulus $2187=3^7$. We have
\begin{align*}
&\sum_{n\ge 0}b(27n+7)q^n\\
&\quad\equiv\frac{q^{-7}}{2^{78}\psi(q)^{54}}\bigg(252 \frac{\varphi(-q)^{74}}{\varphi(-q^3)^{24}} + 504 \frac{\varphi(-q)^{70}}{\varphi(-q^3)^{20}} +918 \frac{\varphi(-q)^{66}}{\varphi(-q^3)^{16}} + 2034 \frac{\varphi(-q)^{62}}{\varphi(-q^3)^{12}}\\
&\quad\quad  + 396 \frac{\varphi(-q)^{58}}{\varphi(-q^3)^{8}} + 1755 \frac{\varphi(-q)^{54}}{\varphi(-q^3)^{4}} + 225 \varphi(-q)^{50} + 1530 \varphi(-q)^{46}\varphi(-q^3)^{4}\\
&\quad\quad  + 1701 \varphi(-q)^{42}\varphi(-q^3)^{8} + 1620 \varphi(-q)^{38}\varphi(-q^3)^{12}\bigg) \pmod{2187}.
\end{align*}

Continuing the dissection, we have
\begin{align*}
&\sum_{n\ge 0}b(81n+61)q^n\\
&\quad\equiv\frac{q^{-21}}{2^{240}\psi(q)^{162}}\bigg(729 \frac{\varphi(-q)^{230}}{\varphi(-q^3)^{72}} + 1458 \frac{\varphi(-q)^{226}}{\varphi(-q^3)^{68}} + 1458 \frac{\varphi(-q)^{218}}{\varphi(-q^3)^{60}} + 729 \frac{\varphi(-q)^{214}}{\varphi(-q^3)^{56}}\\
&\quad\quad + 729 \frac{\varphi(-q)^{194}}{\varphi(-q^3)^{36}} + 1458 \frac{\varphi(-q)^{190}}{\varphi(-q^3)^{32}} + 1458 \frac{\varphi(-q)^{182}}{\varphi(-q^3)^{24}} + 729 \frac{\varphi(-q)^{178}}{\varphi(-q^3)^{20}}\\
&\quad\quad + 729 \varphi(-q)^{158} + 1458 \varphi(-q)^{154}\varphi(-q^3)^{4} + 1458 \varphi(-q)^{146}\varphi(-q^3)^{12}\\
&\quad\quad + 729 \varphi(-q)^{142}\varphi(-q^3)^{16}\bigg) \pmod{2187}.
\end{align*}
Now by checking the coefficients modulo $729$, we can see that
$$b(81n+61)\equiv 0\pmod{729},$$
and thus complete the proof of Theorem \ref{th:1.1}.

Furthermore, we can rewrite $\sum_{n\ge 0}b(81n+61)q^n$ as
\begin{align*}
&\sum_{n\ge 0}b(81n+61)q^n\\
&\quad\equiv\frac{729q^{-21}}{2^{240}\psi(q)^{162}}\varphi(-q)^{142}\varphi(-q^3)^{16}\\
&\quad\quad\quad\times(1+\kappa(q)^{9}+\kappa(q)^{18})(1+2\kappa(q)+2\kappa(q)^3+\kappa(q)^4) \pmod{2187},
\end{align*}
where
$$\kappa(q)=\frac{\varphi(-q)^4}{\varphi(-q^3)^4}.$$
According to \eqref{eq:2.5}, we have
$$\kappa(q)=1-8qw(q)^3,$$
and thus
$$\kappa(q)\equiv 1-2qw(q^3)=1-\xi(q)\pmod{3}.$$
To prove Theorem \ref{th:1.1.5}, we note that
\begin{align*}
&\frac{q^{-21}}{2^{240}\psi(q)^{162}}\varphi(-q)^{142}\varphi(-q^3)^{16}(1+\kappa(q)^{9}+\kappa(q)^{18})(1+2\kappa(q)+2\kappa(q)^3+\kappa(q)^4)\\
&\quad\equiv\frac{(q^{3})^{-7}}{\psi(q^3)^{54}}(\varphi(-q)\varphi(-q^3)^{47})\varphi(-q^3)^{16}(1+\kappa(q^3)^{3}+\kappa(q^3)^{6})(1-\kappa(q))^4\\
&\quad\equiv \frac{(q^{3})^{-7}}{\psi(q^3)^{54}}\varphi(-q^3)^{63}\varphi(-q^9)(1+\kappa(q^3)^{3}+\kappa(q^3)^{6})(1-\xi(q))\xi(q)^4 \pmod{3}.
\end{align*}
Hence
\begin{align*}
&\sum_{n\ge 0}b(81n+61)q^n\\
&\quad\equiv\frac{729(q^{3})^{-7}}{\psi(q^3)^{54}}\varphi(-q^3)^{63}\varphi(-q^9)(1+\kappa(q^3)^{3}+\kappa(q^3)^{6})(1-\xi(q))\xi(q)^4 \pmod{2187}.
\end{align*}
Since $\xi(q)=2qw(q^3)$, we can see that the right hand side contains no term of the form $q^{3n}$. Hence
$$b(243n+61)\equiv 0 \pmod{2187}$$
holds for all $n\ge 0$.

We next prove Theorem \ref{th:1.2}. If we extract terms of the form $q^{3n}$ from $\sum_{n\ge 0}b(27n+7)q^n$ and replace $q^3$ by $q$, we have
\begin{align*}
&\sum_{n\ge 0}b(81n+7)q^n\\
&\quad\equiv\frac{9q^{-20}w(q)\varphi(-q)^{154}\varphi(-q^3)^{4}}{2^{239}\psi(q)^{162}}\\
&\quad\quad\quad\times(5+ 8\kappa(q)+ 5\kappa(q)^{2}+ 6\kappa(q)^{3}+ 6\kappa(q)^{4}+ 6\kappa(q)^{5}+ 3\kappa(q)^{6}\\
&\quad\quad\quad\quad\quad+ 3\kappa(q)^{7}+ 3\kappa(q)^{8}+ 8\kappa(q)^{9}+ 2\kappa(q)^{10}+ 8\kappa(q)^{11}+ 3\kappa(q)^{12}\\
&\quad\quad\quad\quad\quad+ 3\kappa(q)^{13}+ 3\kappa(q)^{14}+ 6\kappa(q)^{15}+ 6\kappa(q)^{16}+ 6\kappa(q)^{17}+ 5\kappa(q)^{18}\\
&\quad\quad\quad\quad\quad+ 8\kappa(q)^{19}+ 5\kappa(q)^{20}) \pmod{81}.
\end{align*}
Note that
\begin{align*}
(1-x)^{20}&\equiv1+ 7x+ x^{2}+ 3x^{3}+ 3x^{4}+ 3x^{5}+ 6x^{6}+ 6x^{7}+ 6x^{8}\\
&\quad\quad+ 7x^{9}+ 4x^{10}+ 7x^{11}+ 6x^{12}+ 6x^{13}+ 6x^{14}+ 3x^{15}+ 3x^{16}\\
&\quad\quad+ 3x^{17}+ x^{18}+ 7x^{19}+ x^{20}\pmod{9}.
\end{align*}
We therefore have
\begin{align*}
&\sum_{n\ge 0}b(81n+7)q^n\\
&\quad\equiv9\times\frac{5q^{-20}w(q)\varphi(-q)^{154}\varphi(-q^3)^{4}}{2^{239}\psi(q)^{162}}(1-\kappa(q))^{20}\\
&\quad\equiv9\frac{(q;q)_\infty^{531}(q^6;q^6)_\infty^{179}}{(q^2;q^2)_\infty^{539}(q^3;q^3)_\infty^{175}}\pmod{81}.\\
\end{align*}
One readily sees from the binomial theorem that
$$(q;q)_\infty^9\equiv(q^3;q^3)_\infty^3\pmod{9}.$$
We therefore conclude that
\begin{align*}
&\sum_{n\ge 0}b(81n+7)q^n\\
&\quad\equiv9(q^3;q^3)_\infty^2\left(\frac{(q;q)_\infty^{9}}{(q^3;q^3)_\infty^{3}}\right)^{59} \frac{(q^2;q^2)_\infty}{(q^6;q^6)_\infty} \left(\frac{(q^6;q^6)_\infty^{3}}{(q^2;q^2)_\infty^{9}}\right)^{60}\\
&\quad\equiv 9\frac{(q^2;q^2)_\infty(q^3;q^3)_\infty^2}{(q^6;q^6)_\infty}\pmod{81}.
\end{align*}
This proves \eqref{eq:lin2}.

At last, we extract terms of the form $q^{3n+1}$ from $\sum_{n\ge 0}b(27n+7)q^n$ and replace $q^3$ by $q$. Then
\begin{align*}
&\sum_{n\ge 0}b(81n+34)q^n\\
&\quad\equiv9\frac{q^{-20}w(q)^2\varphi(-q)^{154}\varphi(-q^3)^{4}}{2^{238}\psi(q)^{162}}\\
&\quad\quad\quad\times(1+ 7\kappa(q)+ \kappa(q)^{2}+ 3\kappa(q)^{3}+ 3\kappa(q)^{4}+ 3\kappa(q)^{5}+ 6\kappa(q)^{6}\\
&\quad\quad\quad\quad\quad+ 6\kappa(q)^{7}+ 6\kappa(q)^{8}+ 7\kappa(q)^{9}+ 4\kappa(q)^{10}+ 7\kappa(q)^{11}+ 6\kappa(q)^{12}\\
&\quad\quad\quad\quad\quad+ 6\kappa(q)^{13}+ 6\kappa(q)^{14}+ 3\kappa(q)^{15}+ 3\kappa(q)^{16}+ 3\kappa(q)^{17}+ \kappa(q)^{18}\\
&\quad\quad\quad\quad\quad+ 7\kappa(q)^{19}+ \kappa(q)^{20})\\
&\quad\equiv 9\frac{q^{-20}w(q)^2\varphi(-q)^{154}\varphi(-q^3)^{4}}{2^{238}\psi(q)^{162}}(1-\kappa(q))^{20}\\
&\quad\equiv 36\frac{(q;q)_\infty^{532}(q^6;q^6)_\infty^{182}}{(q^2;q^2)_\infty^{540}(q^3;q^3)_\infty^{178}}\\
&\quad\equiv 36\frac{(q;q)_\infty}{(q^3;q^3)_\infty}\left(\frac{(q;q)_\infty^{9}}{(q^3;q^3)_\infty^{3}}\right)^{59}(q^6;q^6)_\infty^2\left(\frac{(q^6;q^6)_\infty^{3}}{(q^2;q^2)_\infty^{9}}\right)^{60}\\
&\quad\equiv 36\frac{(q;q)_\infty(q^6;q^6)_\infty^2}{(q^3;q^3)_\infty}\pmod{81}.
\end{align*}
We therefore end the proof of \eqref{eq:lin3}.

\section{Concluding remarks}

Recall that from Lemma \ref{le:2.4}, for any positive integer $\alpha$, we can find some $\ell$, such that $\sum_{n\ge 0}b(3^\alpha n+\ell)q^n$ has the form
$$\frac{q^\alpha}{\psi(q)^\beta}\sum_{i=1}^n v(s_i,t_i)\varphi(-q)^{s_i}\varphi(-q^3)^{t_i},$$
where $s_1\equiv s_2\equiv\cdots\equiv s_n\equiv 2 \pmod{4}$, $t_1\equiv t_2\equiv\cdots\equiv t_n\equiv 0 \pmod{4}$, and $s_1+t_1=s_2+t_2=\cdots=s_n+t_n$. In fact, it is easy to show by induction that
$$\ell=
\begin{cases}
1+\frac{3^{\alpha+1}-3}{4} & \alpha \text{ even},\\
1+\frac{3^{\alpha}-3}{4} & \alpha \text{ odd}.
\end{cases}$$

From Theorem \ref{th:1.1.5}, we see that for $\alpha=6$ and $7$ (and thus $\ell=547$ in both cases),
$$b(729n+547)\equiv 0\pmod{2187}$$
and
$$b(2187n+547)\equiv 0\pmod{2187}.$$
However the modulus $2187=3^7$ is the best choice since
\begin{align*}
b(547)&=2135474526556068875092854278074796547960\\
&=2^3\times 3^7\times 5\times 41\times 61\times 151\times 11909\times 5427748132276664632973303.
\end{align*}
We therefore want to know if there is a general family of congruences modulo higher power of $3$ for $b(3^\alpha n+\ell)$?

\section*{Acknowledgement}

Many thanks go to George E. Andrews for encouraging me in the study of partition theory.

\bibliographystyle{amsplain}

\end{document}